\newtheorem{theorem}{Theorem}[section]
\newtheorem{lemma}[theorem]{Lemma}
\newtheorem{corollary}[theorem]{Corollary}
\theoremstyle{definition}
\newtheorem{remark}[theorem]{Remark}
\newtheorem{definition}[theorem]{Definition}
\newcommand{\sigbu}{\Sigma^\bullet}
\newcommand{\pibu}{\Pi^\bullet}
\newcommand{\pibucurried}[2]{\pibu_{#1}{#2}}
\newcommand{\pair}[2]{\left( #1 \; , \; #2 \right)}
\newcommand{\bigpair}[2]{\big( #1 \; , \; #2 \big)}
\newcommand{\functioncompose}{\ensuremath{\mkern-1mu \circ \mkern-1mu}}
\newcommand{\PointedPred}[1]{\operatorname{Fam_{#1}^{\bullet}}}
\newcommand{\ie}{i.e.\ }
\newcommand{\cf}{see}
\newcommand{\prd}[1]{\Pi_{#1}}
\newcommand{\sm}[1]{\Sigma_{#1}}
\newcommand{\lam}[1]{\lambda #1 .}
\newcommand{\blank}{\mathord{\hspace{1pt}\text{--}\hspace{1pt}}}
\newcommand{\jdeq}{\equiv}
\newcommand{\defeq}{\vcentcolon\equiv}
\newcommand{\eqvsym}{\simeq}
\newcommand{\eqv}[2]{\ensuremath{#1 \eqvsym #2}\xspace}
\newcommand{\eqvauto}[1]{\eqv{#1}{#1}}
\newcommand{\iseqown}[1]{\ensuremath{\mathsf{e}_{#1}}\xspace}
\newcommand{\eqvsymspace}{\enspace \eqvsym \enspace}
\newcommand{\eqvspace}[2]{\ensuremath{#1 \eqvsymspace #2}\xspace}
\newcommand{\idfunc}[1][]{\ensuremath{\mathsf{id}_{#1}}\xspace}
\newcommand{\idsym}{{=}}
\newcommand{\id}[3][]{\ensuremath{#2 =_{#1} #3}\xspace}
\newcommand{\idtypevar}[1]{\ensuremath{\mathsf{Id}_{#1}}\xspace}
\newcommand{\idauto}[2][]{\id[#1]{#2}{#2}}
\newcommand{\refl}[1]{\ensuremath{\mathsf{refl}_{#1}}\xspace}
\newcommand{\ct}{%
  \mathchoice{\mathbin{\raisebox{0.5ex}{$\displaystyle\centerdot$}}}%
             {\mathbin{\raisebox{0.5ex}{$\centerdot$}}}%
             {\mathbin{\raisebox{0.25ex}{$\scriptstyle\,\centerdot\,$}}}%
             {\mathbin{\raisebox{0.1ex}{$\scriptscriptstyle\,\centerdot\,$}}}
}
\newcommand{\opp}[1]{\mathord{{#1}^{-1}}}
\newcommand{\trans}[2]{\ensuremath{{#1}_{*}\mathopen{}\left({#2}\right)\mathclose{}}\xspace}
\newcommand{\isequiv}{\ensuremath{\mathsf{isequiv}}}
\newcommand{\UU}{\ensuremath{\mathcal{U}}\xspace}
\newcommand{\UUp}{{\UU_\bullet}}
\newcommand{\UUtt}[2]{\ensuremath{ \UU_{#1}^{\text{\scalebox{0.5}{$\leq$}} #2}}}
\newcommand{\UUt}[1]{\ensuremath{ \UU^{\text{\scalebox{0.5}{$\leq$}} #1}}}
\newcommand{\N}{\ensuremath{\mathbb{N}}\xspace}
\newcommand{\emptyt}{\ensuremath{\mathbf{0}}\xspace}
\newcommand{\unit}{\ensuremath{\mathbf{1}}\xspace}
\newcommand{\ttt}{\ensuremath{\star}\xspace}
\newcommand{\bool}{\ensuremath{\mathbf{2}}\xspace}
\newcommand{\btrue}{{1_{\bool}}}
\newcommand{\bfalse}{{0_{\bool}}}
\newcommand{\swap}{\ensuremath{\textsf{swap}}}
\newcommand{\iscontr}{\ensuremath{\mathsf{isContr}}}
\newcommand{\isprop}{\ensuremath{\mathsf{isProp}}}
\newcommand{\isset}{\ensuremath{\mathsf{isSet}}}
\def\compare#1#2#3#4{\if#1#3\if#2#41\else0\fi\else0\fi}
\newcommand{\istype}[1]{
  \edef\a{\compare-2#1\empty\empty}
  \if\a1 \iscontr \else
  \edef\b{\compare-1#1\empty\empty}
  \if\b1 \isprop \else
  \edef\c{#1}
  \if0\c \isset \else
  \mathsf{is}\mbox{-}{#1}\mbox{-}\mathsf{type} \fi\fi\fi
}
\newcommand{\idtoeqv}{\ensuremath{\mathsf{idtoeqv}}\xspace}
\newcommand{\Sn}{\mathbb{S}}
\newcommand{\base}{\ensuremath{\mathsf{base}}\xspace}
\newcommand{\lloop}{\ensuremath{\mathsf{loop}}\xspace}
\newcommand{\happly}{\ensuremath{\mathsf{happly}}}
\newcommand{\brck}[1]{\left\lVert #1 \right\rVert}
\newcommand{\fst}{\pi_1 \mkern-1mu}
\newcommand{\snd}{\pi_2 \mkern-1mu}
\newcommand{\pted}[1]{\mathfrak #1}
\newcommand{\loops}{\mathsf{Loop}}
\newcommand{\boolpower}[1]{\bool^{(#1)}}
\title[Higher Homotopies in a Hierarchy of Univalent Universes]{Higher Homotopies in a Hierarchy \\ of Univalent Universes}
\author{Nicolai Kraus}
\address{University of Nottingham}
\email{ngk@cs.nott.ac.uk}
\author{Christian Sattler}
\address{University of Nottingham}
\curraddr{University of Leeds}
\email{c.sattler@leeds.ac.uk}
\begin{document}

\begin{abstract}
For Martin-L\"of type theory with a hierarchy $\UU_0 : \UU_1 : \UU_2 : \ldots$ of univalent universes, we show that $\UU_n$ is not an $n$-type. 
Our construction also solves the problem of finding a type that \emph{strictly} has some high truncation level without using higher inductive types. 
In particular, $\UU_n$ is such a type if we restrict it to $n$-types.

We have fully formalized and verified our results within the dependently typed language and proof assistant Agda.
\end{abstract}

\maketitle

\vspace*{-24pt}

\section{Introduction} \label{sec:intro}

One of the most basic and well-known implications of Voevodsky's univalence axiom~\cite{voevodsky_univalentFoundationsProjectNSF} is that the first type universe, written $\UU_0$, does not have unique identity proofs.
This is due to the fact that, for an example, the type $\bool$ of boolean values is equivalent to itself in two different ways.
These equivalences give rise to two different inhabitants of $\id[\UU_0] \bool \bool$, a type that is sometimes written as $\idtypevar {{\UU_0}} (\bool, \bool)$.
In the language of homotopy type theory (HoTT)~\cite{HoTTbook}, this means that $\UU_0$ is not a \emph{set} or a \emph{$0$-type}.

This statement holds in standard Martin-L\"of type theory (MLTT) with one univalent universe.
If we have a hierarchy $\UU_0 : \UU_1 : \UU_2 : \ldots$ of univalent universes, it is natural to ask what we can say about the truncation levels of higher universes.
Reading through the argument that $\UU_0$ is not a $0$-type and choosing a number $n$, it seems plausible that the hierarchy allows the construction of types that can be shown to be not $n$-truncated.

However, this turns out to be fairly involved.
The question of how it could be done was discussed at the Special year on Univalent Foundations at the Institute for Advanced Study in Princeton (2012--2013).
As sketched above, the type $\bool$ is sufficient to see that $\UU_0$ is not a $0$-type.
To go further, one idea that was suggested by several people (first by Finster and Lumsdaine, as far as we know) was to consider the type of types that are \emph{merely equal} to $\bool$, written 
$\sm {X:\UU_0} \brck{\id[\UU_0] X \bool}$, 
where $\brck \blank$
is the \emph{propositional truncation}.
Technically, propositional truncation is an additional concept which is not available in the considered setting, but it can be encoded in a suitable way so that the construction can be carried out in plain MLTT with univalence.
The idea of $\sm {X:\UU_0} \brck{\id[\UU_0] X \bool}$ is to take $\bool$, but ``wrap'' it once, defining some sort of ``subtype'' (or ``subuniverse'') of $\UU_0$ that only contains $\bool$.
This operation shifts the non-trivial proof of $\id[\UU_0] \bool \bool$ by one level.
Finster and Lumsdaine used the construction to show that $\UU_1$ is not a $1$-type.
They also tried to repeat the ``wrapping'' in order to get the corresponding statements for higher universes, but this became difficult very quickly, and it was unclear whether the strategy could be used to prove a general statement.
Another argument for the fact that $\UU_1$ is not a $1$-type was given by Coquand, using the type of $\mathbb Z / 2\mathbb Z$-sets, \ie sets together with involutions.
It is not clear how a generalization of this construction could be used for higher cases. 

In this article, we solve the general and hitherto open problem of constructing a ``strict'' $(n+1)$-type ($(n+1)$-, but not $n$-truncated) for every $n \geq -1$.
It then turns out that the ``subuniverse'' of $\UU_n$ which only contains $n$-types is already such a strict $(n+1)$-type itself.
Our construction also shows immediately that the universe $\UU_n$ is not an $n$-type.
We say that the universes are \emph{non-trivial in a high dimension} or \emph{have a high truncation level}, even though both expressions are slightly inaccurate.
Our constructions can be understood as a usage of higher inductive types in a theory that does not support them, which we also briefly discuss.

Further results along these lines will be presented in the Ph.D. thesis of the first-named author~\cite{nicolai:thesis}. 
In particular, it will be shown that the idea of iteratively ``wrapping'' $\bool$ can be made precise with the help of lemmata that we present in this article. 
However, the necessity to encode truncations impredicatively requires an additional universe level and thus leads to weaker results than those we prove here.

\paragraph{Contents}
In Section~\ref{sec:prelim}, we specify the type theory that we work in and explain some notation.
We stick closely to the standard textbook on HoTT~\cite{HoTTbook}.
A proof that the second universe is not a groupoid can be found in Section~\ref{sec:U1}.
We use Section~\ref{sec:pointed} to develop some simple, but very useful theory on pointed types and the interaction of loop spaces with dependent pair and dependent function types.
Section~\ref{sec:not-truncated-construction} contains our main results: we show that universe $\UU_n$ is not $n$-truncated, and we construct a type that is ``strictly'' of truncation level $n+1$, \ie in particular not of level $n$.
Finally, in Section~\ref{sec:conclusions}, we make some concluding remarks.

\paragraph{On polymorphism}
When we say that $\UU_n$ is not an $n$-type, or that we can construct a \emph{strict} $(n+1)$-type, the number $n$ necessarily is a fixed constant. 
This is because HoTT as specified in~\cite{HoTTbook} does not regard universe levels as a type that one can eliminate into, and an expression such as $\prd{n:\N} \neg \istype n (\UU_n)$ is therefore not a type.
The only thing we can do is proving that for any given $n$, the type $\neg \istype n (\UU_n)$ is inhabited. 
We do this by an external induction on $n$, \ie when we prove $\neg \istype {(n+1)} (\UU_{n+1})$, we assume that we already have a derivation of $\neg \istype n (\UU_n)$ and of corresponding lemmata.
From the point of view of the type theory, occurrences of $n$ are always in canonical form $S(\ldots(S 0)\ldots)$, with the length of this expression depending on the current step in the external induction over derivations.  

\paragraph{Agda formalization} 
This article is supplemented by an electronic appendix which contains formalizations of all our results in the programming language and proof assistant \emph{Agda}~\cite{Norell2007Towards}, making use of the HoTT community's Agda library~\cite{hott_library}.
All proofs have been verified to type check in Agda version 2.4.2.

Deserving mention is a subtle difference between the type theory commonly used to develop HoTT~\cite{HoTTbook}, also used in this article, and the one that Agda implements.
While HoTT universes are \emph{cumulative}, \ie $A:\UU$ and $\UU : \UU'$ imply $A : \UU'$, Agda requires explicit \emph{lifting}.
A sour consequence of this is the following: recall that the univalence axiom implies $\id[\UU_{k+1}] {(\id A B)} {(\eqv A B)}$ for types $A, B : \UU_k$.
Note however that this cannot be stated in Agda, the reason being that $\id A B$ lives in $\UU_{k+1}$, while $\eqv A B$ lives in $\UU_k$.
We can still make this statement by first lifting $\eqv A B$ to the universe $\UU_{k+1}$.
To avoid impacting the readability of our code by manifold instances of lifting, we have striven to represent (pointed) type equality by (pointed) equivalence wherever possible in the formalization.
In a theory with proper cumulativity of universes, both versions work equally well. 

Potentially controversial features used by this formalization include $\eta$-rules for record types (\ie dependent pair types) and an internal type representing levels in the universe hierarchy not reflected in HoTT.
The former is not in any way crucial. 
The latter means that the explanations in the paragraph on polymorphism are not applicable in the theory that Agda implements. 
For example, the expression $\prd{n:\N} \neg \istype n (\UU_n)$ indeed \emph{is} a type in Agda. 

This is very fortunate: it allows us to formalize our result as a single Agda program instead of a countably infinite family of such.
Applying the Agda term for the above Agda type to any canonical natural number $n$ reduces to a derivation of $\neg\istype n (\UU_n)$ not using quantification over universe indices.

Other minor differences are explained in detail in the formalization.

\section{Preliminaries} \label{sec:prelim}

We work in the type theory of the \emph{Homotopy Type Theory} textbook~\cite{HoTTbook}.
This book is our main reference and we assume familiarity with it, especially with respect to notation.
However, we explicitly want to show that a hierarchy of univalent universes alone is sufficient to construct types that are not $n$-types, and we do not assume that the theory has truncations, quotients, or (more generally) higher inductive types. 
This amounts to saying that our theory is the version of intensional MLTT that is formally presented in~\cite[Appendix A.2]{HoTTbook}, together with the univalence axiom as specified in~\cite[Appendix A.3.1]{HoTTbook}.
Let us briefly review some of the details of this theory, together with some notation.

\paragraph{Basic concepts of MLTT}
For \emph{strict} (or \emph{judgmental}) equality, we write $\jdeq$.
Identity types are also called \emph{path spaces}, and they are written $\id[A] x y$ or just $\id x y$ (for $x,y : A$).
Applying the eliminator $J$ is called \emph{path induction}.
An important special case is \emph{transport}: if $P$ is a family of types over $A$ and there are $u : \id[A] x y$ as well as $t : P(x)$, then there is $\trans u t : P(y)$.
A path $p : \id x y$ has an \emph{inverse} $\opp p : \id y x$, and for a second path $q : \id y z$, we have the \emph{composition} $p \ct q : \id x z$.

The theory has a hierarchy $\UU_0 : \UU_1 : \UU_2 \ldots$ of universes.
We want to emphasize that this hierarchy is assumed to be \emph{cumulative}: if $A$ is a type in $\UU_n$, then $A$ is also a type in $\UU_{n+1}$.
If we just write $\UU$, the corresponding statement or derivation is to be understood for \emph{any} universe.

We have the basic finite types $\emptyt$, $\unit$, and $\bool$ with inhabitants $\ttt : \unit$ and $\bfalse, \btrue : \bool$, respectively, together with the negation function $\swap : \bool \to \bool$.
For the natural numbers $\N$, note that we assume addition to be defined by recursion on the second argument.
This allows the presentation to follow the traditional convention of writing $n+1$ instead of $1+n$.
Some of the equalities that we claim to hold judgmentally depend on this assumption.

Further, there are dependent function types ($\Pi$), satisfying the judgmental $\eta$ (or \emph{uniqueness}) property, and dependent pair types ($\Sigma$), together with the special case of non-dependent products ($\times$).
We do not require notation for coproducts.

\paragraph{HoTT-specific definitions}
Assume that $A$ is a type.

Given an integer $n \geq -2$, the type $A$ is called an \emph{$n$-type}, or is said to be \emph{$n$-truncated} or of \emph{h-level $(n+2)$}~\cite[Chapter 7.1]{HoTTbook}, if the type $\istype n (A)$ is inhabited. This type is defined by recursion on $n$,
\begin{alignat*}{2}
&\mathsf{is}\mbox{-}{(-2)}\mbox{-}\mathsf{type}(A)&&\defeq   \sm{a : A} \prd{x : A}   \id a x\\
&\istype{(n+1)}(A) &&\defeq   \prd{x,y : A} \istype{n}(\id{x}{y}).
\end{alignat*}
In the special cases that $n$ is $-2$, $-1$, or $0$, we write $\istype {-2}(A)$ (``A is contractible''), $\istype {-1}(A)$ (``$A$ is propositional'' or ``$A$ is a proposition''), and $\istype {0}(A)$ (``$A$ is a set''), respectively.

For a non-dependent function $f$, we write $\isequiv (f)$ for the proposition which states that $f$ is an \emph{equivalence}, defined in any of the ways given in \cite[Chapter 4]{HoTTbook}.
We write $A \simeq B$ for $\sm {f : A \to B} \isequiv (f)$.
For example, the identity $\idfunc [A]$ is an equivalence, and so is the negation function $\swap: \bool \to \bool$.
It is straightforward to define canonical inhabitants $\iseqown \idfunc$ and $\iseqown \swap$ of $\isequiv(\idfunc [A])$ and $\isequiv (\swap)$.

Given a point $a : A$, the pair $(A,a)$ is called a \emph{pointed type} with \emph{underlying type} $A$ and \emph{basepoint} $a$.
Let us write ${\UUp}$ for the type of pointed types with underlying type living in $\UU$~\cite[Definition~2.1.7]{HoTTbook}, that is,
\begin{equation*}
 \UUp \defeq \sm{A : \UU} A.
\end{equation*}
We call ${\UUp}$ the \emph{universe of pointed types} as this matches the intuition.
Note, however, that it is really just a defined type, rather than a primitive of the theory as the universes $\UU_k$ are.
If $(A,a)$ and $(B,b)$ are pointed types, a \emph{pointed function} consists of a map $f : A \to B$ and a proof of $\id{f(a)}{b}$ showing that the basepoint is preserved.
If additionally $f$ is an equivalence, we speak of a \emph{pointed equivalence}.
We call a pointed type $n$-truncated (or an $n$-type, or say that it has truncation level $n$) if its underlying type has that property.

If $(A,a)$ is a pointed type, its \emph{loop space}~\cite[chapter 2.1]{HoTTbook} is the pointed type
\begin{equation*}
 \Omega(A,a) \defeq ((\idauto[A] a) , \refl a),
\end{equation*}
the elements of which are called \emph{loops}.
As $\Omega$ is thus an endomorphism on $\UUp$, it can be composed with itself.
This gives us the \emph{$n$-fold iterated loop space} 
\begin{alignat*}{2}
 &\Omega^0(A,a) &&\defeq (A,a) \\
 &\Omega^{n+1}(A,a) &&\defeq \Omega^n(\Omega(A,a)).
\end{alignat*}

\paragraph{Univalence}
For types $A$ and $B$, there is a canonical map 
\begin{equation*}
 \idtoeqv : (\id A B) \to (\eqv A B),
\end{equation*}
defined by path induction, where $\refl A : \idauto A$ is mapped to $(\idfunc [A] , \iseqown \idfunc)$.
The \emph{uivalence axiom} says that $\idtoeqv$ is an equivalence.
As is standard, we assume the univalence axiom for every universe $\UU_k$, \ie all universes are assumed \emph{univalent}.

It is a well-known and immediate consequence of the univalence axiom that the smallest universe is not a set.
The standard proof goes as follows.
Suppose $\istype{0}(U_0)$.
By definition of $\istype{0}$, this implies $\istype{-1}(\idauto{\bool})$.
Univalence allows us to replace $\idauto{\bool}$ by $\eqvauto \bool$.
However, there are two distinct automorphisms on $\bool$. 
In formulae:
\begin{align*}
\istype{0}(\UU_0)
& \implies \istype{-1}(\idauto{\bool}) \\
& \implies \istype{-1}(\eqvauto \bool) \\
& \implies \id {(\idfunc[\bool] , \iseqown \idfunc)} {(\swap , \iseqown \swap)} \\
& \implies \id{\idfunc[\bool]}{\swap} \\
& \implies \id{\idfunc[\bool] (\btrue)}{\swap (\btrue)} \\
& \implies \id \btrue {\bfalse} \\
& \implies \bot.
\end{align*}

Intuitively, it may appear that the reason why $\UU_0$ is not a set is that an inhabitant of it, namely $\bool$, is already not a proposition.
However, possibly somewhat surprisingly, this simple idea is rather misleading, and the proof of $\neg \istype 1 (\UU_1)$ already requires significantly more thought.

To prove the general version $\neg\istype n (\UU_n)$ for any chosen $n$, we will develop some theory about pointed types.
An important ingredient will be our \emph{local-global looping principle}, allowing us to freely switch between (higher) loops in the universe and families of loops that are indexed over some type.

\paragraph{Equivalences}
We will make frequent use of the following basic equivalences, all of which are directly stated in~\cite{HoTTbook}. 
\begin{enumerate}[label={(E\arabic*)},ref={E\arabic*}]
\item \label{str-ext}
\emph{Strong function extensionality}: for functions $f, g : \prd {A} B$, there is a canonical map from $\id f g$ to $\prd {a : A} (\id {f(a)} {g(a)})$ (usually called $\happly$), defined by path induction.
This map is an equivalence.
\cite[Chapter 4.9]{HoTTbook}

\item \label{sigma-eq}
\emph{Paths between pairs are pairs of paths}: if $(x_1,y_1)$ and $(x_2,y_2)$ are both of type $\sm X Y$, then $\eqvspace{\id{(x_1,y_1)}{(x_2,y_2)}}{\sm {u : \id{x_1}{x_2}}\id{\trans{u}{y_1}}{y_2}}$.
In the case of a non-dependent product $X \times Y$, the latter type simplifies to $(\id{x_1}{x_2}) \times (\id{y_1}{y_2})$.
A special application of this rule concerns pointed types: univalence tells us that, for pointed types $X$ and $Y$, the type of pointed equivalences between them is equivalent to the type of equalities $\id X Y$.
\cite[Theorem 2.7.2]{HoTTbook}

\item \label{forget-contr}
\emph{Neutral contractible components}: if $Y$ depends on $X$ and $Y(x)$ is contractible for all $x$, then $\eqv{\sm X Y}{X}$.
\cite[Lemma 3.11.9 (i)]{HoTTbook}

\end{enumerate}

\section{The second universe is not a 1-type} \label{sec:U1}

In this section, we want to present the proof that $\UU_1$ is not $1$-truncated.
We will not reuse this result later as it will easily follow from more general constructions.
However, the approach we take for this special case contains some of the key ideas and could therefore be supportive for understanding the later developments.

Let us first try to prove $\neg \istype{1}(\UU_1)$ in a similar way as we have proved $\neg \istype{0}(\UU_0)$ in the previous section:
\begin{align*}
\istype{1}(\UU_1) & \implies \istype{0}(\idauto{\UU_0}) \\
 & \implies \istype{0}(\eqvauto{\UU_0}) \\
 & \implies \istype{-1}(\idauto{(\idfunc[\UU_0] , \iseqown \idfunc )}) \\
 & \implies \text{\ldots ?}
\end{align*}
In the attempt above, in the very first step, we have to choose two inhabitants of $\UU_1$ with sufficiently complicated equality type.
We have chosen $\UU_0$ as we have already seen before that $\UU_0$ is not a set.

The problem is that we seem unable to derive a contradiction from the assumption $\istype 0 (\eqvauto {\UU_0})$.
In fact, an expected meta-theoretic result (related to \emph{parametricity}) is that the identity is the only definable auto-equivalence on $\UU_0$, and that we cannot write down a non-trivial proof that it equals itself.
Because of this, we do not even expect that $\istype {-2} (\eqvauto {\UU_0})$ implies a contradiction (although it does if we assume the law of excluded middle for propositions).
It belongs to a collection of meta-theoretic properties that, to the best of our knowledge, have not been proven rigorously in the presence of univalence so far, but are commonly believed to hold.

This motivates a more well-behaved choice for the problematic first step.
We use the type of loops in $\UU_0$,  
\begin{equation*}
  L \; \defeq \; \sm{X : \UU_0} \idauto{X}.
\end{equation*}
Showing that the second universe is not a groupoid proceeds as follows:
\begin{alignat*}{4}
&&&&& \istype{1}(\UU_1) \\
&\quad & \implies &\quad &&\istype{0}(\idauto{L}) \\
\text{(by univalence)}
 && \implies & &&\istype{0}(\eqvauto{L}) \\
\text{(choose the identity)}
 && \implies & &&\istype{-1}(\idauto{(\idfunc[L], \iseqown \idfunc)}) \\
\intertext{Here, we have a type of paths between pairs.
By~(\ref{sigma-eq}), this corresponds to pairs of paths.
The second component will be trivial: $\iseqown \idfunc$ lives in a propositional type, and its path type will thus be contractible.
We apply (\ref{forget-contr}) and conclude that the type of paths between two equivalences is equivalent to the type of paths between the underlying functions.}
 && \implies &&& \istype{-1}(\idauto{\idfunc[L]}) \\
\text{(by \ref{str-ext})} 
 && \implies &&& \istype{-1}(\prd{a : L} \idauto{a})\\
\text{(unfold $L$ and curry)}
 && \implies &&& \istype{-1}(\prd{X : \UU_0}\prd{p : \idauto{X}} \; \idauto{(X, p)}) \\
\text{(by~\ref{sigma-eq})}
 && \implies &&& \istype{-1}\left(\prd{X : \UU_0}\prd{p : \idauto{X}} \sm{q : \idauto{X}} \; \id{\trans{q}{p}}{p}\right)
\end{alignat*}
It is a standard lemma that transporting a path along a path can be written as path composition~\cite[Theorem~2.11.5]{HoTTbook}: $\id{\trans{q}{p}}{\opp{q} \ct p \ct q}$.
Making this replacement and precomposing with $q$, we get $\istype{-1}(K)$ where
\begin{equation*}
K \; \defeq \; \prd{X : \UU_0}\prd{p : \idauto{X}} \sm{q : \idauto{X}} \; \id{p \ct q}{q \ct p}.
\end{equation*}
Two inhabitants of $K$ are
\begin{align*}
 &\alpha \defeq \lam{X} \lam {p} (\refl{X}, u),\\
 &\beta  \defeq \lam{X} \lam {p} (p, \refl{p \ct p})
\end{align*}
where $u$ is a proof of $\id{p \ct \refl{X}}{\refl{X} \ct p}$.
Since $K$ is propositional, we may conclude $\id[K]{\alpha}{\beta}$.
Choosing $\bool$ for $X$, this implies that any proof of $\idauto \bool$ is equal to $\refl{\bool}$, and we get the same 
contradiction as we got in the proof of $\neg \istype 0 (\UU_0)$.

In the general case, we consider higher loops in higher universes.
The core obstacle in translating the above proof is the step where $\trans{q}{p} = p$ is observed to hold for $q \defeq \refl{X}$ and $q \defeq p$ by virtue of $\trans{q}{p} = \opp{q} \ct p \ct q$.
In general, it is not so clear how a uniform presentation of transporting along higher loops would look like, and it seems unlikely that the analogue of $q$ would admit non-canonical choices even in one dimension higher.
However, as we will see below, this obstacle can effectively be bypassed for higher dimensions.

\section{Pointed Types} \label{sec:pointed}

Pointed types, as defined in~\cite{HoTTbook}, are a simple but helpful concept.
Their properties can usually easily be formulated in terms of ordinary types.
For our presentation, we will develop some of their theory explicitly in this section, aiming to express elegantly how $\Omega$ interacts with $\Sigma$ and $\Pi$.

\subsection{Dependent Pairs and Loops}

We will first treat the interaction of $\Sigma$ and $\Omega$.
Let us begin by recalling the following definition:
\begin{definition}[{pointed family, \cf~\cite[Definition~5.8.1]{HoTTbook}}]
 For a pointed type $\pted{A} \jdeq (A, a)$, a \emph{pointed family} is a type family $P : A \to \UU$ where the type over the basepoint is again pointed:
 \begin{equation*}
   \PointedPred{\pted A} \defeq \sm {P : A \to \UU} P(a).  
 \end{equation*}
 Extending the notion of truncatedness from types to families, we say that the pointed family $(P,p)$ is $n$-truncated if $P$ is a family of $n$-types.
\end{definition}

\begin{remark}
 The definition of a pointed family is identical to that of a \emph{pointed predicate}~\cite[Definition~5.8.1]{HoTTbook}.
However, we want the reader to think of actual families, and \emph{predicates} are usually understood as ``logical'' (propositional) properties.
 Note that a pointed type can always be seen as a pointed family over the trivial pointed type $(\unit , \ttt)$.
\end{remark}

Let $(P, p)$ be a pointed family over some pointed type $(A,a)$.
There is an induced type family $\tilde P$ over $\Omega (A,a)$ given by $\tilde P(q) \defeq \id[P(a)]{\trans q p}{p}$.
The type over the basepoint is $P(\refl a) \jdeq (\idauto p)$ and therefore trivially inhabited by reflexivity.
This allows us to define
a fibred version of $\Omega$:

\begin{definition}[$\tilde\Omega$]
 For a pointed type $\pted A \jdeq (A,a)$, we define
 \begin{align*}
  &\tilde \Omega : \PointedPred {\pted A} \to \PointedPred {\Omega {\pted A}} \\
  &\tilde\Omega (P,p) \defeq \pair   {\lam q \id[P(a)]{\trans q p}{p}}   {\refl {p}} .
 \end{align*}
 Consequently, $\Omega$ and $\tilde \Omega$ together form the following endofunction:
 \begin{align*}
  &\langle \Omega , \tilde\Omega \rangle : \sm {{\pted A} : \UUp} \PointedPred {\pted A} \to \sm {{\pted A} : \UUp} \PointedPred {\pted A} \\
  &\langle \Omega , \tilde\Omega \rangle (\pted A , \pted P) \jdeq (\Omega {\pted A} , \tilde \Omega {\pted P})
 \end{align*}
\end{definition}
Given a pair of a pointed type and a pointed family, it is straightforward to construct a pointed type corresponding to the dependent sum.
\begin{definition}[$\sigbu$]
 We define the operator $\sigbu$ in the following way:
 \begin{align*}
  &\sigbu : (\sm {{\pted A} : \UUp} \PointedPred {\pted A}) \to \UUp \\
  &\sigbu ((A,a) , (P,p)) \defeq    \pair {\sm {A} P}  {(a , p)}
 \end{align*}
 We write $\sigbu_{\pted A} \pted P$ synonymously for $\sigbu (\pted A, \pted P)$.
\end{definition}

We are now ready to formulate precisely how dependent sums and loop spaces interact.
\begin{lemma} \label{om-si-comm}
 The operators $\sigbu$ and $\Omega$ commute in the following sense:
 \begin{equation*}
  \id{\Omega \functioncompose \sigbu}{\sigbu \functioncompose \langle \Omega , \tilde \Omega \rangle}.
 \end{equation*}
\end{lemma}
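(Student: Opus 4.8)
The plan is to reduce the stated equality of functions, pointwise, to a single instance of~(\ref{sigma-eq}). First, to prove $\id{\Omega\functioncompose\sigbu}{\sigbu\functioncompose\langle\Omega,\tilde\Omega\rangle}$, by strong function extensionality~(\ref{str-ext}) it suffices to show, for every pointed type $\pted A \jdeq (A,a)$ and every pointed family $\pted P \jdeq (P,p)$ over it, that
\[ \id[\UUp]{\Omega(\sigbu_{\pted A} \pted P)}{\sigbu_{\Omega \pted A}(\tilde\Omega \pted P)}. \]

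Next I would unfold both sides using the definitions of $\sigbu$, $\Omega$, and $\tilde\Omega$. The left-hand side becomes the pointed type
\[ \pair{\idauto[\sm A P]{(a,p)}}{\refl{(a,p)}}, \]
whereas the right-hand side becomes
\[ \pair{\sm{q : \idauto[A]{a}} \id[P(a)]{\trans q p}{p}}{(\refl a, \refl p)}, \]
where the basepoint component $\refl p$ on the right is well-typed because $\trans{\refl a}{p} \jdeq p$ — the same judgmental equality that makes the basepoint $\refl p$ of $\tilde\Omega \pted P$, sitting over $\refl a$, well-typed in the first place.

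Now~(\ref{sigma-eq}), applied to the pair $(a,p)$ of type $\sm A P$ with itself, supplies an equivalence $\eqvspace{\idauto[\sm A P]{(a,p)}}{\sm{q : \idauto[A]{a}} \id{\trans q p}{p}}$ between the two underlying types. The one remaining thing to check is that this equivalence is pointed, i.e.\ that it carries $\refl{(a,p)}$ to $(\refl a, \refl p)$; but the map underlying the equivalence of~(\ref{sigma-eq}) is defined by path induction, sending $\refl{(x,y)}$ to $(\refl x, \refl y)$, so this holds judgmentally. We thus obtain a pointed equivalence, and by univalence — used in the form recorded in~(\ref{sigma-eq}), identifying pointed equivalences with equalities in $\UUp$ — this gives the required equality of pointed types.

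I do not expect a genuine obstacle: the argument is an unfolding of definitions followed by one application of~(\ref{sigma-eq}). The only mild subtleties are the bookkeeping of basepoints — notably the judgmental equality $\trans{\refl a}{p} \jdeq p$ that makes the right-hand basepoint terms typecheck — and the routine check that the canonical equivalence of~(\ref{sigma-eq}) is pointed on the nose. Note that, unlike the warm-up computation in Section~\ref{sec:U1}, this step does not need the contractible-component trick~(\ref{forget-contr}); that refinement will only enter when treating the analogous interaction of $\Omega$ with $\Pi$.
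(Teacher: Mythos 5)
Your proposal is correct and follows essentially the same route as the paper's proof: reduce to a pointwise equality via~(\ref{str-ext}), unfold $\sigbu$, $\Omega$, and $\tilde\Omega$, and apply~(\ref{sigma-eq}) to identify the two pointed types. The only difference is that you spell out explicitly the basepoint bookkeeping (that the canonical map of~(\ref{sigma-eq}) sends $\refl{(a,p)}$ to $(\refl a,\refl p)$ judgmentally, and that the pointed equivalence is turned into an equality in $\UUp$ via the pointed form of~(\ref{sigma-eq})), which the paper leaves implicit in its single ``by~(\ref{sigma-eq})'' step.
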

\begin{proof}
 Let $\pted A \jdeq (A,a)$ be a pointed type with a pointed family $\pted P \jdeq (P,p)$.
By function extensionality (\ref{str-ext}), it is enough to show that both sides of the equation are equal if applied to $(\pted A, \pted P)$.
Let us calculate: 
\begin{alignat*}{4}
 && &&& (\Omega \functioncompose \sigbu)(\pted A , \pted P)\\
  \text{(by definition of $\sigbu$)} && \quad & \jdeq & \quad & \Omega \pair{\sm A P} {(a,p)} \\
  \text{(by definition of $\Omega$)} &&& \jdeq && \pair {\idauto {(a,p)}} {\refl {(a,p)}} \\
  \text{(by~\ref{sigma-eq})} &&& \; \idsym && \pair {\sm {q : \idauto a} \id{\trans{q}{p}}{p}} {(\refl a , \refl p)} \\
  \text{(by definition of $\sigbu$)} &&& \jdeq && \sigbu_{(\idauto a , \refl a)} (\lam q \id[P(a)]{\trans q p}{p} , \refl p) \\
  \text{(by definition of $\Omega$ and $\tilde\Omega$)} &&& \jdeq && (\sigbu \functioncompose \langle \Omega , \tilde\Omega \rangle) (\pted A , \pted P).  
\end{alignat*}
\end{proof}

\subsection{Dependent Functions and Loops}

The situation is similar, and even simpler, if we want to examine the interaction of $\Pi$ and $\Omega$.
Given a family of pointed types over $A$, there is a straightforward way to construct a pointed type out of the given data corresponding to the dependent function type.

\begin{definition}[$\pibu$]
 We define the operator $\pibu$ by:
\begin{align*}
 &\pibu : (\sm {A : \UU} (A \to \UUp)) \to \UUp \\
 &\pibu (A, \pted F) \defeq \pair {\prd {A} \fst \functioncompose \pted F}  {\snd \functioncompose \pted F}
\end{align*}
We use the notations $\pibucurried {a:A}{\pted{F}(a)}$ and $\pibu_A \pted F$ synonymously for $\pibu (A, \pted F)$.
Note that the type $A$ is \emph{not} pointed.
\end{definition}

With this at hand, we are ready to prove:

\begin{lemma} \label{om-pi-comm}
 $\Omega$ and $\pibu$ commute in the following sense: given a type $A$ and a family $\pted F$ of pointed types over $A$, we have
 \begin{equation*}
  \id{\Omega (\pibu (A , \pted F))}{\pibu (A , \Omega \functioncompose \pted F)}.
 \end{equation*}
\end{lemma}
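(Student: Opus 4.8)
The plan is to unfold both sides into concrete pointed types, observe that their underlying types are identified by function extensionality, and check that the basepoints already agree judgmentally; the pointed form of~(\ref{sigma-eq}) then converts the resulting pointed equivalence into the required equality in $\UUp$. (Unlike Lemma~\ref{om-si-comm}, the statement is already instantiated on $A$ and $\pted F$, so there is no outer function-extensionality step.)

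Write $F \defeq \fst \functioncompose \pted F : A \to \UU$ and $f \defeq \snd \functioncompose \pted F : \prd{a : A} F(a)$, so that $\pibu(A, \pted F) \jdeq (\prd{a:A} F(a), f)$ and hence, by definition of $\Omega$,
\[
 \Omega(\pibu(A, \pted F)) \jdeq \bigpair{\idauto[\prd{a:A}F(a)]{f}}{\refl f} .
\]
On the other side, $\Omega \functioncompose \pted F$ is the family $\lam a \bigpair{\idauto[F(a)]{f(a)}}{\refl{f(a)}}$ of pointed types over $A$, so that
\[
 \pibu(A, \Omega \functioncompose \pted F) \jdeq \bigpair{\prd{a:A} \idauto[F(a)]{f(a)}}{\lam a \refl{f(a)}} .
\]
Next I would invoke strong function extensionality~(\ref{str-ext}): the map $\happly$ is an equivalence between the underlying type $\idauto[\prd{a:A}F(a)]{f}$ of the left-hand pointed type and the underlying type $\prd{a:A}\idauto[F(a)]{f(a)}$ of the right-hand one. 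Since $\happly$ is defined by path induction, $\happly(\refl f) \jdeq \lam a \refl{f(a)}$, which is exactly the basepoint of $\pibu(A, \Omega \functioncompose \pted F)$. Hence $\happly$ together with reflexivity is a pointed equivalence between the two pointed types, and passing it through the pointed reformulation of~(\ref{sigma-eq}) --- the step that uses univalence --- yields the claimed path.

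I do not expect a genuine obstacle: the argument is a chain of definitional unfoldings followed by one use each of~(\ref{str-ext}) and~(\ref{sigma-eq}). The only point deserving a second glance is that the basepoints coincide \emph{judgmentally} rather than merely up to a path, which is what makes the basepoint-preservation component of the pointed equivalence trivial; this is precisely where the present lemma is simpler than Lemma~\ref{om-si-comm}, whose analogous identification of the fibres already required~(\ref{sigma-eq}).
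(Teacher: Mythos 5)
Your proposal is correct and follows essentially the same route as the paper: unfold both sides by definition, use strong function extensionality (\ref{str-ext}) via $\happly$ on the underlying types, observe that $\happly(\refl f) \jdeq \lam a \refl{f(a)}$ matches the basepoint, and pass to a path of pointed types. The paper compresses the last two steps into a single ``by~(\ref{str-ext})'' identification of the pairs, whereas you make explicit the basepoint check and the pointed version of~(\ref{sigma-eq}); this is just added detail, not a different argument.
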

\begin{proof}
Let us do the following calculation:
\begin{alignat*}{4}
 && &&& \Omega (\pibu (A , \pted F)) \\
  \text{(by definition of $\pibu$)} && \quad & \jdeq & \quad & \Omega   \pair  {\prd {A} \fst \functioncompose \pted F}  {\snd \functioncompose \pted F}  \\
  \text{(by definition of $\Omega$)} &&& \jdeq && \bigpair {\idauto {\snd \functioncompose \pted F}}  {\refl {\snd \circ \pted F}}  \\
  \text{(by~\ref{str-ext})} &&& \; \idsym && \bigpair{\prd {a:A} \idauto {\snd (\pted F(a))}}     {\lam {a} \refl {\snd (\pted F(a))}  }  \\ 
  \text{(by definition of $\pibu$)} &&& \jdeq &&  \pibu (A , \Omega \functioncompose \pted F).    
\end{alignat*}
\end{proof}

\section{Homotopically Complicated Types} \label{sec:not-truncated-construction}

In this section, we prove the main results of this article. 
We construct a type that \emph{strictly} has truncation level $(n+1)$, namely the type of $(n+1)$-loops in $\UU_n$ which are based at $n$-types. 
From that, it will easily follows that $\UUtt n n$ is a strict $(n+1)$-type as well, and that $\UU_n$ is not $n$-truncated. 

We begin with a lemma that tells us how a truncated $\Sigma$-component can be neutralized by $\Omega$.
\begin{lemma}\label{forget-lemma}
 Let $n$ be a natural number.
 Further, let $\pted A$ be a pointed type and $\pted P$ be a pointed family over $\pted A$ of truncation level $n-2$.
Then,
 \begin{equation*}
  \id{\Omega^n (\sigbu_{\pted A} \pted P)}{\Omega^n(\pted A)}.
 \end{equation*}
\end{lemma}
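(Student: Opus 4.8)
The plan is to prove this by induction on $n$, keeping the pointed type $\pted A$ and the pointed family $\pted P$ universally quantified, because the inductive step re-applies the statement to a \emph{different} pointed type and family. Two observations carry the argument. First, Lemma~\ref{om-si-comm} lets us push $\Omega$ through $\sigbu$: applying $\happly$ (\ref{str-ext}) to it at $(\pted A,\pted P)$ and unwinding definitions yields $\id{\Omega(\sigbu_{\pted A}\pted P)}{\sigbu_{\Omega\pted A}(\tilde\Omega\pted P)}$. Second, $\tilde\Omega$ lowers the truncation level of a pointed family by exactly one: if $(P,p)$ is a family of $m$-types over $(A,a)$, then $\tilde\Omega(P,p)(q) \jdeq \id[P(a)]{\trans q p}{p}$ is a path type inside an $m$-type, hence an $(m-1)$-type. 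Therefore $n$ applications of $\tilde\Omega$ turn a family of level $n-2$ into a family of contractible types.

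For the base case $n \jdeq 0$ we have $\Omega^0(\sigbu_{\pted A}\pted P) \jdeq \sigbu_{\pted A}\pted P$ and $\Omega^0(\pted A) \jdeq \pted A$, while the hypothesis makes $\pted P$ a family of $(-2)$-types, i.e.\ of contractible types. By~(\ref{forget-contr}) the first projection $\sm A P \to A$ is an equivalence; since it sends the basepoint $(a,p)$ to $a$, it is a pointed equivalence $\sigbu_{\pted A}\pted P \eqvsym \pted A$, and the pointed-types instance of~(\ref{sigma-eq}) turns this into the desired path. For the inductive step, assume the statement for $n$ and let $\pted P$ have level $(n+1)-2 \jdeq n-1$. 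Applying $\mapfunc{\Omega^n}$ to the pointwise form of Lemma~\ref{om-si-comm} and using $\Omega^{n+1} \jdeq \Omega^n \functioncompose \Omega$ gives
\begin{equation*}
 \id{\Omega^{n+1}(\sigbu_{\pted A}\pted P)}{\Omega^{n}(\sigbu_{\Omega\pted A}(\tilde\Omega\pted P))}.
\end{equation*}
Now $\tilde\Omega\pted P$ is a pointed family over $\Omega\pted A$ of level $(n-1)-1 \jdeq n-2$, so the inductive hypothesis at $(\Omega\pted A,\tilde\Omega\pted P)$ gives $\id{\Omega^{n}(\sigbu_{\Omega\pted A}(\tilde\Omega\pted P))}{\Omega^{n}(\Omega\pted A)}$, and $\Omega^{n}(\Omega\pted A) \jdeq \Omega^{n+1}(\pted A)$; concatenating the two paths closes the induction.

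The main obstacle is organisational rather than mathematical: the induction must be set up over the statement with $\pted A$ and $\pted P$ quantified so that the hypothesis can be re-applied at $(\Omega\pted A,\tilde\Omega\pted P)$, and one has to track the off-by-one shift in truncation levels, noting that $n \in \N$ keeps $n-2 \geq -2$ so that every truncation index occurring is legitimate (in particular no contractible family is ever fed to $\tilde\Omega$ along the way). One should also check that all types involved stay within a single universe, so that the univalence step~(\ref{sigma-eq}) applies without any lifting.
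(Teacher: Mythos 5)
Your proof is correct and follows essentially the same route as the paper: induction on $n$ with $\pted A$ and $\pted P$ quantified, the base case handled by~(\ref{forget-contr}), and the inductive step by Lemma~\ref{om-si-comm} together with the observation that $\tilde\Omega$ lowers the truncation level of the family by one. Your base case merely spells out the conversion of the equivalence from~(\ref{forget-contr}) into a pointed equality via~(\ref{sigma-eq}), which the paper leaves implicit.
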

\begin{proof}
 We do induction on $n$.
For the base case $n \jdeq 0$, the statement is exactly given by~(\ref{forget-contr}).
For the induction case, we have the following chain of equalities:
\begin{alignat*}{4}
 &&& && \Omega^{n+1}(\sigbu_{\pted A} \pted P) \\
  \text{} &&& \jdeq &&  \Omega^{n}(\Omega(\sigbu_{\pted A} \pted P)) \\
  \text{(by Lemma~\ref{om-si-comm})} && \quad & \; \idsym & \quad &   \Omega^{n} (\sigbu_{\Omega \pted A}  \tilde\Omega \pted P)    \\
  \text{(by induction hypothesis)} &&& \; \idsym &&  \Omega^{n}(\Omega(\pted A)) \\
  \text{} &&& \jdeq &&  \Omega^{n+1}(\pted A)
\end{alignat*}
For the penultimate step, note that if $\pted P$ is $(n-1)$-truncated, then $\tilde\Omega \pted P$ is $(n-2)$-truncated.
\end{proof}

We are now ready to prove our local-global looping principle, stating that a loop in the universe is the same as a family of loops in the basepoint:
\begin{lemma}[local-global looping] \label{local-global}
 Let $A$ be a type in a universe $\UU$ and $n$ be a natural number.
Then,
 \begin{equation*}
   \id{\Omega^{n+2}(\UU , A)}{\pibucurried {a:A} {\Omega^{n+1}(A,a)}}.
 \end{equation*}
\end{lemma}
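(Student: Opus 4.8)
The plan is to dismantle the pointed type $\Omega^{n+2}(\UU, A) \jdeq \Omega^{n+1}(\Omega(\UU, A))$ in three stages: first rewrite the innermost $\Omega(\UU, A)$ as $\sigbu$ of a propositional pointed family by univalence; then neutralise that $\Sigma$-component using Lemma~\ref{forget-lemma}; and finally recognise what remains as a $\pibu$ and slide the outstanding loops inside via Lemma~\ref{om-pi-comm}.

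\emph{Stage 1.} Observe that $\Omega(\UU, A) \jdeq (\idauto A, \refl A)$, and that $\idtoeqv : (\idauto A) \to (\eqvauto A)$ is an equivalence by univalence, with $\idtoeqv(\refl A) \jdeq (\idfunc[A], \iseqown\idfunc)$. Hence $\idtoeqv$ underlies a \emph{pointed} equivalence from $\Omega(\UU, A)$ to $(\eqvauto A, (\idfunc[A], \iseqown\idfunc))$, base-point preservation holding by definition, and by the pointed-types instance of~(\ref{sigma-eq}) we get $\id[\UUp]{\Omega(\UU, A)}{(\eqvauto A, (\idfunc[A], \iseqown\idfunc))}$. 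Unfolding $\eqvauto A \jdeq \sm{f : A \to A} \isequiv(f)$, the right-hand side is precisely $\sigbu_{\pted A} \pted P$ for $\pted A \defeq (A \to A, \idfunc[A])$ and the pointed family $\pted P \defeq (\isequiv, \iseqown\idfunc)$ over $\pted A$. As $\isequiv(f)$ is a proposition, $\pted P$ is $(-1)$-truncated and therefore also $(n-1)$-truncated, since $n - 1 \geq -1$.

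\emph{Stages 2 and 3.} Applying $\mapfunc{\Omega^{n+1}}$ to the path from Stage~1 and then Lemma~\ref{forget-lemma} with the natural number $n + 1$ — whose truncation hypothesis asks for level $(n+1) - 2 = n - 1$, exactly what we just verified — yields $\id{\Omega^{n+2}(\UU, A)}{\Omega^{n+1}(\pted A)}$. Now $\pted A \jdeq \pibu(A, \lam a (A, a))$ by definition of $\pibu$ (using $\eta$), so it only remains to commute $n + 1$ loops past the $\pibu$. An easy induction on $k$ (base case trivial; step case $\mapfunc{\Omega^k}$ applied to Lemma~\ref{om-pi-comm}, using $\Omega^{k+1} \jdeq \Omega^k \functioncompose \Omega$) gives $\id{\Omega^k(\pibu(A, \pted F))}{\pibu(A, \Omega^k \functioncompose \pted F)}$ for every family $\pted F$ of pointed types over $A$; with $k \defeq n + 1$ and $\pted F \defeq \lam a (A, a)$ this reads $\id{\Omega^{n+1}(\pted A)}{\pibucurried{a:A}{\Omega^{n+1}(A, a)}}$. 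Concatenating the three equalities proves the lemma.

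\emph{Main obstacle.} The only genuinely delicate point is Stage~1: one must check that the equivalence coming from univalence is really a \emph{pointed} equivalence, i.e.\ that $\idtoeqv(\refl A)$ is the chosen basepoint $(\idfunc[A], \iseqown\idfunc)$ on the nose, since this is what licenses treating $\Omega(\UU, A)$ as $\sigbu$ of a pointed family and hence feeding it to Lemma~\ref{forget-lemma}. Everything else is bookkeeping: aligning the truncation levels with the hypothesis of Lemma~\ref{forget-lemma}, and the routine induction iterating Lemma~\ref{om-pi-comm}.
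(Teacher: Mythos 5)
Your proposal is correct and follows essentially the same route as the paper's proof: rewrite the innermost loop space via univalence as $\sigbu$ of the propositional family $(\isequiv,\iseqown\idfunc)$ over $(A\to A,\idfunc[A])$, neutralise it with Lemma~\ref{forget-lemma} at level $n+1$, recognise the result as $\pibu(A,\lam a (A,a))$, and commute the remaining loops via Lemma~\ref{om-pi-comm}. You merely spell out details the paper leaves implicit (the pointedness of the univalence equivalence, the use of $\mathsf{ap}$, and the iteration of Lemma~\ref{om-pi-comm}), all of which check out.
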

\begin{proof}
 The proof is again done by a calculation, utilizing most of the theory we have developed so far:
\begin{alignat*}{4}
 &&& && \Omega^{n+2}(\UU , A) \\
  \text{(by definition)} && \quad & \jdeq & \quad &  \Omega^{n+1} \pair{\idauto A}  {\refl A}  \\
  \text{(by univalence)} &&& \; \idsym && \Omega^{n+1} \pair{\eqvauto A}{(\idfunc [A] , \iseqown \idfunc)}                    \\
  \text{(by definition of $\eqvsym$)} &&& \jdeq && \Omega^{n+1} \pair{\sm {f : A \to A} \isequiv (f)}{(\idfunc [A] , \iseqown \idfunc)}  \\
  \text{(by definition of $\sigbu$)} &&& \jdeq && \Omega^{n+1}   (  \sigbu_{(A \to A , \idfunc [A])} (\isequiv  ,  \iseqown \idfunc)   )    \\ 
  \text{(by Lemma~\ref{forget-lemma})} &&& \; \idsym && \Omega^{n+1}(A \to A , \idfunc [A]) &&  \\
  \text{(by definition of $\pibu$)} &&& \jdeq && \Omega^{n+1}(\pibucurried {a:A}{(A,a)}) \\
  \text{(by Lemma~\ref{om-pi-comm})} &&& \; \idsym && \pibucurried{a:A}{\Omega^{n+1}(A , a)}.  
\end{alignat*}
\end{proof}
Note that the ``$+2$'' (respectively ``$+1$'') in the statement of Lemma~\ref{local-global} is necessary. This is because $\Omega(\UU,A)$ and $\pibucurried {a:A} {(A,a)}$ are in general not the same, as the latter can be simplified to $(A \to A , \idfunc[A])$.
In the given proof, the step where Lemma~\ref{forget-lemma} is applied would fail.

It will be useful to consider the restriction of a universe to its $n$-types:
\begin{definition}[{$\UUt n$, \cf~\cite[Chapter 7.1]{HoTTbook}}]
 For a universe $\UU$ and an integer $n \geq -2$, we define $\UUt n$ as the ``subuniverse'' of $n$-types, that is,
 \begin{equation*}
  \UUt n  \defeq \sm {A : \UU} \istype n (A).
 \end{equation*}
 Similar as for $\UUp$, we call $\UUt n$ a \emph{universe}, but it is important to note that it is a defined type, not a primitive of the theory.
\end{definition}
The following two simple and well-known observations will be useful:
\begin{lemma}[{\kern-0.4em\cite[Theorem~7.2.9]{HoTTbook}}] \label{lem:n-type-if-loop-contr}
 For $n \geq -1$, a type $A$ is an $n$-type if and only if for every $a$ in $A$, the loop space $\Omega^{n+1}(A,a)$ is contractible.  
\end{lemma}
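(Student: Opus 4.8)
The plan is to proceed by induction on $n$, mirroring the recursive definition of $\istype{(n+1)}(A)$; since only a logical equivalence is claimed, ordinary path induction suffices to carry each direction.

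For the base case $n \jdeq -1$ we have $\Omega^{0}(A,a) \jdeq (A,a)$, so the statement reduces to the standard fact that $A$ is a proposition if and only if $A$ is contractible whenever it is inhabited. For one direction, if $A$ is a proposition and $a : A$, then $a$ together with the proofs of $\id a x$ witnesses $\iscontr(A)$; conversely, if $A$ is contractible for every $a : A$, then given $x, y : A$ we use $x$ to obtain a center of contraction and hence a proof of $\id x y$, so $A$ is a proposition.

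For the induction step, assume the claim for $n$, where $n \geq -1$. By definition, $A$ is an $(n+1)$-type exactly when $\id x y$ is an $n$-type for all $x, y : A$; applying the induction hypothesis to each identity type, this is equivalent to asking that $\Omega^{n+1}(\id x y, p)$ be contractible for all $x, y : A$ and all $p : \id x y$. The key point is that, by path induction, a family of propositions ranging over all triples $(x, y, p)$ with $x, y : A$ and $p : \id x y$ is inhabited everywhere if and only if it is inhabited on the diagonal at the reflexivity path, \ie for every $a : A$ at $x \jdeq y \jdeq a$ and $p \jdeq \refl a$. There, the iterated loop space is judgmentally the one we want: $\Omega^{n+1}(\id a a, \refl a) \jdeq \Omega^{n+1}(\Omega(A,a)) \jdeq \Omega^{n+2}(A,a)$. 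Hence $A$ is an $(n+1)$-type if and only if $\Omega^{n+2}(A,a)$ is contractible for every $a : A$, which is the claim at $n + 1$.

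The only slightly delicate step is the reduction just described: one has to recognize that asserting contractibility of $\Omega^{n+1}(\id x y, p)$ for all $x, y, p$ is an instance of path induction, so it may be checked --- or used --- only at the reflexivity loop, where by definition of $\Omega$ the iterated loop space collapses to $\Omega^{n+2}(A,a)$. Everything else is the routine unfolding of $\istype{(n+1)}$ and of iterated loop spaces, together with the elementary proposition-versus-contractible fact invoked in the base case.
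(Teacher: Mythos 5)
Your proof is correct; note that the paper does not prove this lemma itself but simply cites \cite[Theorem~7.2.9]{HoTTbook}, and your induction --- base case the standard fact that a proposition is the same thing as a type that is contractible whenever inhabited, inductive step a path induction over triples $(x,y,p)$ that collapses $\iscontr(\Omega^{n+1}(\id{x}{y},p))$ to $\iscontr(\Omega^{n+2}(A,a))$ on the diagonal, using that $\Omega^{n+1}(\idauto{a},\refl{a}) \jdeq \Omega^{n+2}(A,a)$ holds judgmentally with the paper's definition of iterated loop spaces --- is essentially the book's own argument, where the diagonal reduction is packaged as Theorem~7.2.7. One cosmetic remark: the restriction to a ``family of propositions'' in your path-induction step is unnecessary, since the eliminator $J$ applies to arbitrary type families (though the restriction is of course satisfied here, as contractibility is propositional).
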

\begin{lemma}[{\kern-0.4em\cite[Theorem~7.1.11]{HoTTbook}}] \label{Un-is-Sn-type}
 For any $n \geq -2$ and universe $\UU$, the type $\UUt n$ is $(n+1)$-truncated.  
\end{lemma}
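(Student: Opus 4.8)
Since this is \cite[Theorem~7.1.11]{HoTTbook}, one could simply cite it; but the argument is short, so here is how I would run it. The plan is to first reduce the identity types of $\UUt n$ to types of equivalences, and then to recognise those as $n$-types. I would fix $(A,p),(B,q):\UUt n$ and use that ``being an $n$-type'' is a mere proposition \cite[Theorem~7.1.10]{HoTTbook}: then $\id{\trans{u}{p}}{q}$ is contractible for every $u:\id[\UU]{A}{B}$, so (\ref{sigma-eq}) followed by (\ref{forget-contr}) yields an equivalence between $\id[\UUt n]{(A,p)}{(B,q)}$ and $\id[\UU]{A}{B}$, which univalence in turn identifies with $\eqv A B$. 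Hence it suffices to show that $\eqv A B$ is an $n$-type whenever $A$ and $B$ are; all identity types of $\UUt n$ are then $n$-types, which is exactly the assertion that $\UUt n$ is $(n+1)$-truncated.

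For that core fact I would write $\eqv A B \jdeq \sm{f:A\to B}\isequiv(f)$ and argue componentwise: $A\to B$ is an $n$-type because $B$ is (closure of $n$-types under $\Pi$, using (\ref{str-ext}); \cite[Theorem~7.1.9]{HoTTbook}), each $\isequiv(f)$ is a proposition and hence an $n$-type for $n\geq -1$, and $n$-types are closed under $\Sigma$ \cite[Theorem~7.1.8]{HoTTbook}. For $n\geq 0$ one can alternatively stay inside the toolkit of this section: by Lemma~\ref{lem:n-type-if-loop-contr} it is enough that $\Omega^{n+1}(\eqv A B, e)$ be contractible; since the $\isequiv$-component is propositional and hence $(n-1)$-truncated, Lemma~\ref{forget-lemma} discards it, and writing $A\to B$ as $\pibucurried{a:A}{(B,f(a))}$ and iterating Lemma~\ref{om-pi-comm} turns $\Omega^{n+1}(A\to B,f)$ into $\pibucurried{a:A}{\Omega^{n+1}(B,f(a))}$, a $\Pi$ of contractible types (each contractible since $B$ is an $n$-type, again by Lemma~\ref{lem:n-type-if-loop-contr}).

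The hard part will be the bottom case $n=-2$, where this breaks down because a proposition need not be contractible, so neither ``$\isequiv(f)$ is a $(-2)$-type'' nor the truncation-forgetting step is available. I would settle it directly: if $(A,p),(B,q):\UUt{-2}$ then $A$ and $B$ are contractible, so every $f:A\to B$ is an equivalence and $\isequiv(f)$, being an inhabited proposition, is contractible; hence $\eqv A B$ is a $\Sigma$ of a contractible type over a contractible type and is therefore itself contractible, \ie a $(-2)$-type. Together with the reduction of the first paragraph this shows that every identity type of $\UUt{-2}$ is contractible, \ie that $\UUt{-2}$ is $(-1)$-truncated, which completes the argument.
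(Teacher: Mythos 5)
Your proof is correct; the paper itself gives no argument for this lemma, simply citing \cite[Theorem~7.1.11]{HoTTbook}, and your reduction of paths in $\UUt n$ to $\eqv A B$ via propositionality of $\istype n$, univalence, and closure of $n$-types under $\Pi$ and $\Sigma$ (with the separate inhabitedness argument at $n=-2$) is essentially the cited proof. No gaps.
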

For $n \in \N$, let us write $P_n(X)$ for the type of $(n+1)$-loops that live in the universe $\UUtt n n$ and have basepoint $X$.
More precisely, we abbreviate
\begin{align*}
 & P_n : {\UUtt n n} \to \UUp_{n+1} \\
 & P_n(X) \defeq \Omega^{n+1}(\UUtt n n , X).
\end{align*}
Homotopically, these loops $P_n(X)$ are rather tame:
\begin{corollary}[of Lemma~\ref{Un-is-Sn-type}] \label{Pn-are-sets}
 $P_n$ is a family of sets, that is,
 \begin{equation*}
  \prd {\UUtt n n} \istype 0 \functioncompose P_n.
 \end{equation*}
\end{corollary}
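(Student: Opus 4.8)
The plan is to recognise $P_n(X) \jdeq \Omega^{n+1}(\UUtt n n, X)$ as an $(n+1)$-fold iterated loop space of the type $\UUtt n n$, which by Lemma~\ref{Un-is-Sn-type} is $(n+1)$-truncated, and then to use that each application of $\Omega$ lowers the truncation level of the underlying type by one. So first I would fix an arbitrary $X : \UUtt n n$ and reduce to showing that (the underlying type of) $\Omega^{n+1}(\UUtt n n, X)$ is a set.

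The key ingredient I would isolate is the following elementary fact, proved by induction on $j$: for every pointed type $(B,b)$ and all integers $j \geq 0$ and $k \geq j$, if $B$ is $k$-truncated, then the underlying type of $\Omega^j(B,b)$ is $(k-j)$-truncated. The base case $j \jdeq 0$ is immediate since $\Omega^0(B,b) \jdeq (B,b)$. For the step, recall that $\Omega^{j+1}(B,b) \jdeq \Omega^j(\Omega(B,b))$ by definition of the iterated loop space; the underlying type of $\Omega(B,b)$ is $\idauto[B] b$, which is $(k-1)$-truncated because, unfolding the recursive definition $\istype{k}(B) \jdeq \prd{x,y:B}\istype{(k-1)}(\id[B]{x}{y})$ (valid since $k \geq j+1 \geq 1$), being $k$-truncated means exactly that all identity types of $B$ are $(k-1)$-truncated. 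Since $k-1 \geq j$, the induction hypothesis applies to the pointed type $\Omega(B,b)$ with level $k-1$ and exponent $j$, giving that the underlying type of $\Omega^j(\Omega(B,b))$ is $\bigl((k-1)-j\bigr) \jdeq \bigl(k-(j+1)\bigr)$-truncated, as required.

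Applying this fact with $(B,b) \defeq (\UUtt n n, X)$, $k \defeq n+1$ and $j \defeq n+1$ (so that $k \geq j$, and using Lemma~\ref{Un-is-Sn-type} for the hypothesis that $\UUtt n n$ is $(n+1)$-truncated) yields that the underlying type of $\Omega^{n+1}(\UUtt n n, X) \jdeq P_n(X)$ is $\bigl((n+1)-(n+1)\bigr) \jdeq 0$-truncated, i.e.\ a set. As $X$ was arbitrary, $P_n$ is a family of sets.

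I do not expect a genuine obstacle here: the argument is routine index bookkeeping layered on top of the recursive definition of $\istype{n}$ and Lemma~\ref{Un-is-Sn-type}. The only points that deserve a little care are keeping the truncation indices aligned through the induction (which is why it is cleanest to state the sublemma with the explicit hypothesis $k \geq j$), and reading ``$P_n(X)$ is a set'' — as consistently throughout the paper — as a statement about the underlying type of the pointed type $P_n(X)$. One could instead try to derive the result from Lemma~\ref{lem:n-type-if-loop-contr}, but that would require knowing contractibility of the loop space of $P_n(X)$ at \emph{every} point rather than only the basepoint, so the direct induction above is the shortest self-contained route.
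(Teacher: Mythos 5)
Your proof is correct and follows exactly the route the paper intends: the corollary is stated as an immediate consequence of Lemma~\ref{Un-is-Sn-type} together with the standard fact that each application of $\Omega$ lowers the truncation level by one, which is precisely the sublemma you prove by induction on $j$ (with the indices handled correctly, including reading $\istype{0}$ of the pointed type $P_n(X)$ as a statement about its underlying type).
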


Still for $n \in \N$, an $(n+1)$-loop consists of a basepoint $X$ and the actual loop around $X$.
The type of \emph{$(n+1)$-loops in universe $\UUtt n n$} is therefore given by 
\begin{align*}
 \loops_n &: \UU_{n+1} \\
 \loops_n &\defeq \sm {\UUtt n n} \fst \functioncompose P_n.
\end{align*}
We additionally define $\loops_{-1} \defeq \bool$ in the lowest universe $\UU_0$, which makes it possible to treat all universes uniformly instead of only those above $\UU_0$.

This type is also fairly tame homotopically:
\begin{lemma} \label{L-is-trunc}
 For all natural numbers $n$, the type $\loops_{n-1}$ is $n$-truncated, that is, we can construct
 \begin{equation*}
  h_n : \; \istype {n} (\loops_{n-1}).
 \end{equation*}
\end{lemma}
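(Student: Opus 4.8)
The plan is to argue by case analysis on $n$ (equivalently an induction on $n$, though the induction hypothesis will turn out to be unnecessary). The base case is $n \jdeq 0$, where $\loops_{-1} \jdeq \bool$ by definition; since $\bool$ is a set, $h_0 : \istype{0}(\bool)$ is one of the standard facts about the booleans (for instance via decidable equality and Hedberg's theorem, or by a direct computation).

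For the remaining case, write $n \jdeq m+1$ with $m : \N$, so that $\loops_m \jdeq \sm{X : \UUtt m m} \fst(P_m(X))$ and the goal becomes $\istype{(m+1)}(\loops_m)$. The engine of the argument is the standard closure of truncated types under $\Sigma$ (\cite[Theorem~7.1.8]{HoTTbook}): to show that a type $\sm{X : C} B(X)$ is $k$-truncated it suffices that $C$ is $k$-truncated and that $B(X)$ is $k$-truncated for every $X$. It therefore remains to verify the two hypotheses with $k \jdeq m+1$. First, the index type $\UUtt m m$ is $(m+1)$-truncated: this is exactly Lemma~\ref{Un-is-Sn-type} applied to the universe $\UU_m$ at truncation level $m$. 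Second, every fibre $\fst(P_m(X))$ is $(m+1)$-truncated: by Corollary~\ref{Pn-are-sets}, $P_m$ is a family of sets, so $\fst(P_m(X))$ is $0$-truncated, and since $0 \le m+1$ the upward closure of truncation levels (\cite[Theorem~7.1.7]{HoTTbook}) upgrades this to $(m+1)$-truncatedness. Feeding these two facts into the $\Sigma$-closure lemma produces $h_{m+1} : \istype{(m+1)}(\loops_m)$, which completes the case and the proof.

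I do not expect a genuine obstacle here: the lemma is essentially a bookkeeping step that combines the truncation bounds already established — Lemma~\ref{Un-is-Sn-type} for the subuniverse of $n$-types and Corollary~\ref{Pn-are-sets} for the loop-space fibres $P_m(X)$ — with off-the-shelf closure properties of $n$-types. The only points that will require a moment's care are the index shift (the statement concerns $\loops_{n-1}$, so the successor case really treats $\loops_m$ at level $m+1$, matching the indices appearing in $P_m$ and $\UUtt m m$) and the observation that the exceptional clause $\loops_{-1} \jdeq \bool$ makes the base case a real statement rather than a formal artefact, in the same way that $\bool$ failing to be a proposition is what drives the whole development.
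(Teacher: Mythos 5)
Your proposal is correct and coincides with the paper's own argument: the case $n \jdeq 0$ is handled by $\loops_{-1} \jdeq \bool$ being a set, and for $n \jdeq m+1$ one applies the $\Sigma$-closure lemma (Theorem~7.1.8 of the HoTT book) with Lemma~\ref{Un-is-Sn-type} for the first component and Corollary~\ref{Pn-are-sets} together with upward closure of truncation levels (Theorem~7.1.7) for the second.
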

\begin{proof}
 The claim is clearly fulfilled for $n \jdeq 0$, so let us assume $n \geq 1$.
 By a standard lemma~\cite[Theorem~7.1.8]{HoTTbook}, it is enough to examine the two parts of the dependent pair type separately.
The required property for the first part is given by Lemma~\ref{Un-is-Sn-type}.
The second component is a family of sets by Corollary~\ref{Pn-are-sets}, which suffices as $n \geq 0$~\cite[Theorem~7.1.7]{HoTTbook}.
\end{proof}

For a pointed type $(A,a)$, we say that an element $b : A$ is \emph{trivial} if it is equal to the basepoint, \ie if we have a proof of $\id a b$.
\begin{lemma} \label{non-trivial-exists}
 For all $n \geq 0$, the type $\Omega^{n+1} \pair{\UU_{n}} {\loops_{n-1}}$ has a non-trivial inhabitant.
The same is true for $\Omega^{n+1} \pair{\UUtt {n}{n}}{(\loops_{n-1} , h_{n})}$.
\end{lemma}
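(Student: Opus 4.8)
The plan is to prove both assertions simultaneously by external induction on $n$, after reducing the second to the first. Note that $(\UUtt n n, (\loops_{n-1}, h_n)) \jdeq \sigbu_{\pted A} \pted P$ where $\pted A \defeq (\UU_n, \loops_{n-1})$ and $\pted P \defeq (\lam{A} \istype n (A), h_n)$, and that $\pted P$, being a family of propositions, is $(-1)$-truncated, hence — since $n \geq 0$ — also $(n-1)$-truncated. Lemma~\ref{forget-lemma} with natural-number parameter $n+1$ then gives a pointed equality $\id{\Omega^{n+1}(\UUtt n n, (\loops_{n-1}, h_n))}{\Omega^{n+1}(\UU_n, \loops_{n-1})}$, and since transport along a pointed equality is an equivalence fixing the basepoint, having a non-trivial inhabitant is preserved; it therefore suffices to find a non-trivial inhabitant of $\Omega^{n+1}(\UU_n, \loops_{n-1})$. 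For $n \jdeq 0$ we have $\loops_{-1} \jdeq \bool$, so this pointed type is $(\idauto{\bool}, \refl{\bool})$; since $\idtoeqv$ is an equivalence and the pairs $(\idfunc[\bool], \iseqown \idfunc)$ and $(\swap, \iseqown \swap)$ are distinct inhabitants of $\eqvauto\bool$ — if they agreed we would get $\idfunc[\bool] = \swap$ and hence $\btrue = \bfalse$ — their preimages differ, so $\idtoeqv^{-1}(\swap, \iseqown \swap)$ is a loop on $\bool$ distinct from $\idtoeqv^{-1}(\idfunc[\bool], \iseqown \idfunc) = \refl{\bool}$.

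For the inductive step, let $n \geq 1$ and assume the statement at level $n-1$. By the induction hypothesis together with the reduction above (applied at level $n-1$), there is a non-trivial inhabitant $\ell_0$ of $\Omega^n(\UUtt{n-1}{n-1}, (\loops_{n-2}, h_{n-1}))$. Put $X_0 \defeq (\loops_{n-2}, h_{n-1}) : \UUtt{n-1}{n-1}$; since $\Omega^n(\UUtt{n-1}{n-1}, X_0) \jdeq P_{n-1}(X_0)$, we have $\ell_0 : \fst(P_{n-1}(X_0))$, so $a_0 \defeq (X_0, \ell_0)$ is a point of $\loops_{n-1} = \sm{X : \UUtt{n-1}{n-1}} \fst \functioncompose P_{n-1}$. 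Since $\loops_{n-1}$ is a type of $\UU_n$, Lemma~\ref{local-global} applied at index $n-1$ yields a pointed equality $\id{\Omega^{n+1}(\UU_n, \loops_{n-1})}{\pibucurried{a : \loops_{n-1}}{\Omega^n(\loops_{n-1}, a)}}$, whose basepoint (by the definition of $\pibu$) is the family $a \mapsto \reflp{n}{a}$ of trivial iterated-reflexivity loops. It will therefore suffice to construct a dependent function $f : \prd{a : \loops_{n-1}} \fst(\Omega^n(\loops_{n-1}, a))$ with $f(a_0) \neq \reflp{n}{a_0}$, for then $f$ cannot equal $a \mapsto \reflp{n}{a}$.

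To construct $f$, fix $a \jdeq (X, \ell)$ and observe that $(\loops_{n-1}, (X, \ell)) \jdeq \sigbu_{(\UUtt{n-1}{n-1}, X)}(\fst \functioncompose P_{n-1}, \ell)$, with $(\fst \functioncompose P_{n-1}, \ell)$ a family of sets by Corollary~\ref{Pn-are-sets}. If $n \geq 2$, such a family is $(n-2)$-truncated, so Lemma~\ref{forget-lemma} gives a pointed equality $\id{\Omega^n(\loops_{n-1}, (X, \ell))}{\Omega^n(\UUtt{n-1}{n-1}, X)}$, and we let $f(X, \ell)$ be the loop matching $\ell$ under it — legitimate, since $\ell : \fst(P_{n-1}(X)) \jdeq \fst(\Omega^n(\UUtt{n-1}{n-1}, X))$. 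If $n \jdeq 1$, we instead use~(\ref{sigma-eq}) to identify $\fst(\Omega^1(\loops_0, (X, \ell)))$ with $\sm{q : \idauto{X}} \id{\trans q \ell}{\ell}$ and, using the standard computation $\id{\trans q \ell}{\opp q \ct \ell \ct q}$ (as in Section~\ref{sec:U1}), let $f(X, \ell)$ match the pair $(\ell, c)$, where $c$ is the evident path $\id{\trans \ell \ell}{\ell}$ arising from $\id{\opp \ell \ct \ell \ct \ell}{\ell}$. In both cases, $f(a_0)$ corresponds under the chosen identification to $\ell_0$ (respectively, to a pair with first component $\ell_0$), while $\reflp{n}{a_0}$ corresponds to $\reflp{n}{X_0}$ (respectively, to a pair with first component $\reflp{n}{X_0}$); since $\ell_0$ is non-trivial, these are unequal, whence $f(a_0) \neq \reflp{n}{a_0}$ and the induction closes.

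The main obstacle is precisely this inductive step. One has to carry the non-trivial loop $\ell_0$ supplied one dimension down through Lemmas~\ref{forget-lemma} and~\ref{local-global} — this is what pins down the basepoint $a_0$ — and then recognise that putting $\ell$ itself into the $\UUtt{n-1}{n-1}$-coordinate of the $\Sigma$-decomposition of $\Omega^n(\loops_{n-1}, (X, \ell))$ yields a family of loops non-trivial over exactly $a_0$. The case $n \jdeq 1$ is the awkward one: there the fibre over that coordinate is only a proposition rather than contractible, so its inhabitant — a proof that $\ell$ commutes with itself — has to be written out explicitly; this is the same phenomenon that made $\neg \istype 1 (\UU_1)$ harder than $\neg \istype 0 (\UU_0)$ in Section~\ref{sec:U1}.
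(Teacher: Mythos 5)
Your proof is correct and follows essentially the same route as the paper: the same reduction of the second statement via Lemma~\ref{forget-lemma}, the same base case using $\swap$, and the same inductive step via Lemma~\ref{local-global}, producing the family $(X,\ell) \mapsto \ell$ (with trivial second-component data, handled separately for the lowest case) and refuting triviality by evaluating at the point supplied by the induction hypothesis. The only cosmetic difference is that for $n \geq 2$ you collapse $\Omega^n(\loops_{n-1},(X,\ell))$ to $\Omega^n(\UUtt{n-1}{n-1},X)$ by Lemma~\ref{forget-lemma} and transport $\ell$ across, whereas the paper iterates Lemma~\ref{om-si-comm} and writes down the pair $(\ell, d_\ell)$ whose second component lies in a contractible type --- the same move in different packaging.
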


\begin{proof}
 Observe that the pointed type $\pair{\UUtt {n}{n}}{(\loops_{n-1} , h_{n})}$ can be written as (and is judgmentally equal to) the expression $\sigbu_{(\UU_{n} , \loops_{n-1})} (\istype n , h_n)$.
As the predicate $\istype n$ is propositional, Lemma~\ref{forget-lemma} implies the equivalence of the two loop spaces of this lemma, and we may restrict ourselves to showing that the claim holds for $\Omega^{n+1} \pair{\UU_{n}} {\loops_{n-1}}$.

 We do induction on $n$.
 For $n \jdeq 0$, we have to provide a non-trivial inhabitant of $\idauto 2$.
This is $\swap$, which is different from the trivial inhabitant $\refl \bool$ (see Section~\ref{sec:prelim}).

 Assume $n \jdeq m + 1$ and calculate:
 \begin{alignat*}{4}
 &&& && \Omega^{m+2}(\UU_{m+1} , \loops_m) \\
  \text{(by~\ref{local-global} - local-global)} && \quad & \; \idsym  & \quad & \pibucurried {(X,q) : \loops_m} {\Omega^{m+1}(\loops_m , (X,q))}  \\
  \text{(by definition of $\sigbu$)} &&& \jdeq && \pibucurried {(X,q) : \loops_m} {\Omega^{m+1} ({\sigbu_{(\UUtt m m , X)} (\fst \functioncompose P_m , q)}) }\\
  \text{(by~\ref{om-si-comm} - $\Omega$, $\sigbu$ commute)} &&& \; \idsym && \pibucurried {(X,q) : \loops_m} 
      {\sigbu_{\Omega^{m+1} (\UUtt m m , X)} {\tilde\Omega^{m+1} (\fst \functioncompose P_m , q)}}
\end{alignat*}
The underlying type of this last pointed type has the following inhabitant:
\begin{equation*}
 \xi \defeq \lam {(X,q)} (q, d_q)
\end{equation*}
where $d_q$ is defined as follows:
\begin{itemize}
\item
For $m \jdeq 0$, the type of $d_q$ is $\id {\trans q q}{q}$, which is easily seen to be inhabited after writing $\trans q q$ as $\opp q \ct q \ct q$.
Note that this case corresponds to the special case we discussed in Section~\ref{sec:U1}.

\item
For $m \geq 1$, the type of $d_q$ is contractible by Corollary~\ref{Pn-are-sets} and the definition of $\tilde \Omega$, providing a canonical choice for $d_q$.
\end{itemize}
We claim that $\xi$ is non-trivial.
Once again, let us view $\loops_{m-1}$, together with $h_m$, as an inhabitant of $\UUtt m m$.
Recall that we have defined $P_m(\loops_{m-1} , h_m)$ to be $\Omega^{m+1} \pair{\UUtt m m} {(\loops_{m-1} , h_m)}$.
By the induction hypothesis, we can construct a non-trivial inhabitant $\tilde q$ of the underlying type, so that we have
\begin{equation} \label{loops-inhabitant}
\pair{(\loops_{m-1},h_m)}{\tilde q} : \loops_m.
\end{equation}
If $\xi$ was trivial, the term $\fst (\xi (X,q)) \jdeq q$ would be trivial in $P_m(X)$ for any $(X,q) : \loops_m$.
But this is invalidated by (\ref{loops-inhabitant}).
\end{proof}

This allows us to prove:
\begin{theorem}\label{theorem-one}
 In Martin-L\"{o}f type theory with a hierarchy of univalent universes $\UU_0, \UU_1, \UU_2, \ldots$, the universe $\UU_n$ is not an $n$-type.
That is, for any natural number $n$, the type
 \begin{equation*}
  \neg \istype n \left( \UU_n \right)
 \end{equation*}
is inhabited.
\end{theorem}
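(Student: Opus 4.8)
The plan is to derive a contradiction from the hypothesis $\istype n(\UU_n)$ by playing Lemma~\ref{lem:n-type-if-loop-contr} off against Lemma~\ref{non-trivial-exists}. Since the theorem concerns a natural number $n$, we have $n \geq 0$, and in particular $n \geq -1$, so both lemmata apply with no special cases.

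First I would assume $\istype n(\UU_n)$. By Lemma~\ref{lem:n-type-if-loop-contr}, this means that for every point $X : \UU_n$ the iterated loop space $\Omega^{n+1}(\UU_n , X)$ is contractible. Next I would instantiate $X$ with $\loops_{n-1}$: this is legitimate because $\loops_{n-1}$ lives in $\UU_n$ (for $n \geq 1$ this is the defining universe of $\loops_{n-1}$, and for $n = 0$ we have $\loops_{-1} \jdeq \bool : \UU_0$). So $\Omega^{n+1}(\UU_n , \loops_{n-1})$ is a contractible pointed type. In a contractible type every element is equal to the center of contraction, hence (composing two such paths) equal to the basepoint; thus every inhabitant of the underlying type of $\Omega^{n+1}(\UU_n , \loops_{n-1})$ is \emph{trivial} in the sense of the definition preceding Lemma~\ref{non-trivial-exists}.

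Finally I would invoke Lemma~\ref{non-trivial-exists}, which for $n \geq 0$ produces a \emph{non-trivial} inhabitant of exactly this loop space $\Omega^{n+1}(\UU_n , \loops_{n-1})$. This contradicts the previous step, so $\istype n(\UU_n)$ cannot hold, and therefore $\neg\istype n(\UU_n)$ is inhabited.

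At this stage there is essentially nothing hard left to do: all the difficulty has already been absorbed into Lemma~\ref{non-trivial-exists} and the loop-space machinery of Sections~\ref{sec:pointed} and~\ref{sec:not-truncated-construction} on which it depends. The only points requiring a little care are the universe bookkeeping — checking that $\loops_{n-1}$ really does sit in $\UU_n$, so that $\Omega^{n+1}(\UU_n , \loops_{n-1})$ is well-formed — and the routine observation that in a contractible pointed type every element is trivial, which is what makes the clash with Lemma~\ref{non-trivial-exists} immediate.
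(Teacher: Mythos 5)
Your proof is correct and follows essentially the same route as the paper, which likewise derives the contradiction by noting that $\istype n(\UU_n)$ would force $\Omega^{n+1}(\UU_n,\loops_{n-1})$ to be trivial (the paper says propositional, you say contractible via Lemma~\ref{lem:n-type-if-loop-contr} — either suffices), clashing with Lemma~\ref{non-trivial-exists}. The extra bookkeeping you spell out (that $\loops_{n-1}$ lives in $\UU_n$, and that contractibility makes every element trivial) is exactly what the paper leaves implicit.
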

\begin{proof}
 If $\UU_n$ was an $n$-type, then $\Omega^{n+1} \pair{\UU_{n}} {\loops_{n-1}}$ would be propositional, contradicting Lemma~\ref{non-trivial-exists}.
\end{proof}

At the same time, we have solved the question of constructing a ``strict'' $n$-type:
\begin{theorem} \label{theorem-two}
 For a given $n \geq -2$, there is (in the settings of Theorem~\ref{theorem-one}) a type that is an $(n+1)$-type but not an $n$-type.
In particular, for $n \geq -1$, the type $\loops_n$ has this property.
 Further, for $n \geq 0$, the universe of $n$-types at level $n$, namely $\UUtt n n$, is such a strict $(n+1)$-type.
\end{theorem}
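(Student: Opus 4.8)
The plan is to split, for each of the two candidate types, the property ``strictly $(n+1)$-truncated'' into its two halves — being an $(n+1)$-type, which will be immediate from the preparatory results, and \emph{not} being an $n$-type, which carries all the content — and to treat the boundary values $n = -2$ and $n = -1$ by hand.

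First I would record the upper bounds. Lemma~\ref{Un-is-Sn-type} already states that $\UUtt{n}{n}$ is $(n+1)$-truncated for every $n \geq -2$, and Lemma~\ref{L-is-trunc} (with its index set to $n+1$, a natural number as soon as $n \geq -1$) shows that $\loops_n$ is $(n+1)$-truncated for every $n \geq -1$, the case $n = -1$ being precisely the statement that $\loops_{-1} \jdeq \bool$ is a set. For the theorem's first assertion at $n = -2$ one needs only \emph{some} $(-1)$-type that is not a $(-2)$-type, and $\emptyt$ works: it is vacuously a proposition, so $\istype{-1}(\emptyt)$, yet it has no element, so $\neg \istype{-2}(\emptyt)$. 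The remaining boundary case $n = -1$ for $\loops_{-1} \jdeq \bool$ is settled by recalling that $\bool$ is not a proposition, since $\id[\bool]{\btrue}{\bfalse}$ is empty (Section~\ref{sec:prelim}).

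The heart of the matter is that, for $n \geq 0$, neither $\UUtt{n}{n}$ nor $\loops_n$ is $n$-truncated. For $\UUtt{n}{n}$ I would simply rerun the proof of Theorem~\ref{theorem-one} with the pointed type $\pair{\UUtt{n}{n}}{(\loops_{n-1}, h_n)}$ in the role of $\pair{\UU_n}{\loops_{n-1}}$: if $\UUtt{n}{n}$ were an $n$-type, then, iterating the definition of $n$-truncatedness along the basepoint, $\Omega^{n+1}\pair{\UUtt{n}{n}}{(\loops_{n-1}, h_n)}$ would be propositional, contradicting the second statement of Lemma~\ref{non-trivial-exists}. To transfer this to $\loops_n$ I would observe that $\UUtt{n}{n}$ is a \emph{retract} of $\loops_n$: sending $X$ to $(X, e_X)$, where $e_X$ is the basepoint (the iterated reflexivity) of the pointed type $\Omega^{n+1}\pair{\UUtt{n}{n}}{X} \jdeq P_n(X)$, is a section of the first projection $\fst : \loops_n \to \UUtt{n}{n}$. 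Since a retract of an $n$-type is again an $n$-type \cite[\S7.1]{HoTTbook}, the fact that $\UUtt{n}{n}$ is not an $n$-type forces $\loops_n$ not to be one either. Combining this with the upper bounds yields that $\loops_n$ ($n \geq -1$) and $\UUtt{n}{n}$ ($n \geq 0$) are strict $(n+1)$-types, as claimed.

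Once Lemma~\ref{non-trivial-exists} is in hand the argument is essentially bookkeeping; the points that actually demand care are keeping the truncation indices aligned through the various loop-space lemmas and handling $n \in \{-2,-1\}$, where $\loops_n$ is either undefined or just $\bool$. The one genuinely new (and small) ingredient is the retract observation, which lets the lower bound for $\loops_n$ be inherited from that for $\UUtt{n}{n}$ instead of being re-derived; as an alternative one could extract from the proof of Lemma~\ref{non-trivial-exists} a non-trivial element of $\Omega^{n+1}\pair{\loops_n}{((\loops_{n-1}, h_n), \tilde q)}$ directly and invoke Lemma~\ref{lem:n-type-if-loop-contr}.
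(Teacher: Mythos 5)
Your proposal is correct and follows essentially the same route as the paper: empty type for $n=-2$, $\bool$ for $n=-1$, Lemma~\ref{Un-is-Sn-type} plus the second part of Lemma~\ref{non-trivial-exists} for $\UUtt{n}{n}$, and Lemma~\ref{L-is-trunc} for the upper bound on $\loops_n$. Your explicit retract argument (section $X \mapsto (X, e_X)$ of $\fst$) is just the spelled-out version of the paper's compressed remark that the first component of $\loops_n$ is $\UUtt{n}{n}$ while the second component is always inhabited.
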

\begin{proof}
 For $n \jdeq -2$, the empty type proves the statement.
The claim for $\UUtt n n$ follows in the same way as Theorem~\ref{theorem-one}, combined with Lemma~\ref{Un-is-Sn-type}.
$\loops_{-1} \jdeq \bool$ is clearly strictly a set.
For $n \geq 0$, Lemma~\ref{L-is-trunc} shows that $\loops_n$ is $(n+1)$-truncated.
To see that it is not $n$-truncated, observe that the first component is $\UUtt n n$ and therefore not $n$-truncated, while the second component is always inhabited.
\end{proof}

\section{Concluding Remarks} \label{sec:conclusions}

We believe that Theorems~\ref{theorem-one} and~\ref{theorem-two} are as strong as they can be, in the sense that $\UU_n$ can in neither case be replaced by a smaller universe.
More concretely, it should be consistent to assume that every type in $\UU_n$ is $n$-truncated (for any given $n$).
We are not aware of any published proof of this fact, but one approach would be to use that the hierarchy $\UUtt 0 0, \UUtt 1 1, \ldots$ is (in an appropriate sense) closed under all type formers, including universe formation by Lemma~\ref{Un-is-Sn-type}.
This makes it easy to construct a model in which the claimed property holds.

This of course is not true if we consider a theory with so-called \emph{higher inductive types} (HITs)~\cite[Chapter 6]{HoTTbook}.
While univalence is certainly the single most characteristic and powerful concept that homotopy type theory introduces, 
HITs are another interesting feature that is often discussed.
They allow inductive types to be equipped with constructors of (higher) paths rather than only points, and it is not surprising that they can be used to construct types that are not $n$-truncated for a given $n$.
A canonical candidate for a HIT that is not an $(n-1)$-type is the sphere $\Sn^{n}$ which can be generated with one point-constructor $\base$ and one path-constructor $\lloop$ that gives an inhabitant of the underlying type of $\Omega^{n}(\Sn^{n}, \base)$.
Unfortunately, even the seemingly simple statement that $\lloop$ is non-trivial is not amendable to an instant argument.
While $\Sn^n$ has $\mathbb Z$ as $n$-th homotopy group which immediately implies that it is not an $(n-1)$-type, 
calculating it it in HoTT requires some effort, for example using the long exact sequence~\cite{blog_overview}.

On the other hand, the construction of $\loops_n$ that we present in this article can be understood as a way to use spheres even if the theory does not support HITs.
Recall that our type $\loops_n$ was (after unfolding the definition of $P_n$) defined as
\begin{equation*}
 \loops_n \defeq \sm{X : \UUtt n n} \fst(\Omega^{n+1}(\UUtt n n , X)).
\end{equation*}
If HITs are available, $\loops_n$ is equivalent to the function type
\begin{equation*}
 \Sn^n \to \UUtt n n.
\end{equation*}
Even if we do not have $\Sn^n$ available in the theory, we can thus still talk about how the sphere could be mapped into another type (this is the case for any non-recursive HIT).

As mentioned above, further related results will be available in~\cite{nicolai:thesis}, in particular a proof that the type $\boolpower n$ is a strict $n$-type, where
 \begin{align*}
  &\boolpower 0 \defeq \bool \\
  &\boolpower{n+1} \defeq \sm{X : \UU_n} \brck{\id{X} {\boolpower n}}.
 \end{align*}
A nice aspect (compared to Lemma~\ref{non-trivial-exists}) of this solution is that the loop space $\Omega^n(\boolpower n, x)$ is equivalent to $\bool$ for any basepoint $x$. 
The truncation can be encoded impredicatively, although this is trickier than one expects.
Unfortunately, it causes the results to be weaker by one universe level than those of the Theorems~\ref{theorem-one} and~\ref{theorem-two}.

\vspace{1em}
\paragraph*{\textbf{Acknowledgments}}
First of all, we would like to thank the participants, particularly the organizers, of the Univalent Foundations Program in Princeton 2012/2013 for letting us know of the problem of constructing a ``strict'' $n$-type without higher inductive types, and for many beneficial discussions.
We especially thank Thierry Coquand and Thorsten Altenkirch for their stimulating comments and for encouraging us to continue pursuing these results.
The anonymous reviewers of \emph{ACM Transactions on Computational Logic} have given very helpful remarks, and we thank them for all the suggestions that enabled us to improve the presentation of this article.

The second-named author would like to acknowledge the following: 
This material is based on research sponsored by the Air Force Research Laboratory, under agreement number FA8655-13-1-3038.
The U.S.\ Government is authorized to reproduce and distribute reprints for Governmental purposes notwithstanding any copyright notation thereon.
The views and conclusions contained herein are those of the authors and should not be interpreted as necessarily representing the official  policies or endorsements, either expressed or implied, of the Air Force  Research Laboratory or the U.S. Government.

\bibliographystyle{plain}
\bibliography{master}

\end{document}